\begin{document}
\def\eq#1{{\rm(\ref{#1})}}
\theoremstyle{plain}
\newtheorem*{theo}{Theorem}
\newtheorem*{ack}{Acknowledgements}
\newtheorem{thm}{Theorem}[section]
\newtheorem{lem}[thm]{Lemma}
\newtheorem{prop}[thm]{Proposition}
\newtheorem{cor}[thm]{Corollary}
\theoremstyle{definition}
\newtheorem{dfn}[thm]{Definition}
\newtheorem*{rem}{Remark}
\def\coker{\mathop{\rm coker}}
\def\ind{\mathop{\rm ind}}
\def\Re{\mathop{\rm Re}}
\def\vol{\mathop{\rm vol}}
\def\Im{\mathop{\rm Im}}
\def\im{\mathop{\rm im}}
\def\Hol{{\textstyle\mathop{\rm Hol}}}
\def\O{{\mathbin{\mathbb O}}}
\def\C{{\mathbin{\mathbb C}}}
\def\R{{\mathbin{\mathbb R}}}
\def\N{{\mathbin{\mathbb N}}}
\def\Z{{\mathbin{\mathbb Z}}}
\def\L{{\mathbin{\mathcal L}}}
\def\X{{\mathbin{\mathcal X}}}
\def\al{\alpha}
\def\be{\beta}
\def\ga{\gamma}
\def\de{\delta}
\def\ep{\epsilon}
\def\io{\iota}
\def\ka{\kappa}
\def\la{\lambda}
\def\ze{\zeta}
\def\th{\theta}
\def\vt{\vartheta}
\def\vp{\varphi}
\def\si{\sigma}
\def\up{\upsilon}
\def\om{\omega}
\def\De{\Delta}
\def\Ga{\Gamma}
\def\Th{\Theta}
\def\La{\Lambda}
\def\Om{\Omega}
\def\Up{\Upsilon}
\def\sm{\setminus}
\def\na{\nabla}
\def\pd{\partial}
\def\op{\oplus}
\def\ot{\otimes}
\def\bigop{\bigoplus}
\def\iy{\infty}
\def\ra{\rightarrow}
\def\longra{\longrightarrow}
\def\dashra{\dashrightarrow}
\def\t{\times}
\def\w{\wedge}
\def\bigw{\bigwedge}
\def\d{{\rm d}}
\def\bs{\boldsymbol}
\def\ci{\circ}
\def\ti{\tilde}
\def\ov{\overline}
\title[Closed $G_2$-Structures on $T^*X\t\R$]{A Note on Closed $G_2$-Structures and $3$-Manifolds}

\author[Cho, Salur, and Todd]{Hyunjoo Cho, Sema Salur, and A. J. Todd}

\address {Department of Mathematics, University of Rochester, Rochester, NY, 14627}
\email{cho@math.rochester.edu}

\address {Department of Mathematics, University of Rochester, Rochester, NY, 14627}
\email{salur@math.rochester.edu}

\address {Department of Mathematics, University of California - Riverside, Riverside, CA, 92521}
\email{ajtodd@math.ucr.edu}

\begin{abstract}
This article shows that given any orientable $3$-manifold $X$, the $7$-manifold $T^*X\t\R$ admits a closed $G_2$-structure $\vp=\Re\Om+\om\w\d t$ where $\Om$ is a certain complex-valued $3$-form on $T^*X$; next, given any $2$-dimensional submanifold $S$ of $X$, the conormal bundle $N^*S$ of $S$ is a $3$-dimensional submanifold of $T^*X\t\R$ such that $\vp|_{N^*S}\equiv 0$. A corollary of the proof of this result is that $N^*S\t\R$ is a $4$-dimensional submanifold of $T^*X\t\R$ such that $\vp|_{N^*S\t\R}\equiv0$.
\end{abstract}

\date{}
\maketitle
\section*{Introduction}
Berger's classification of the possible holonomy groups for a given Riemannian manifold includes the exceptional Lie group $G_2$ as the holonomy group of a $7$-dimensional manifold. On a given a $7$-dimensional manifold with holonomy group a subgroup of $G_2$, there is a nondegenerate differential $3$-form $\vp$ which is torsion-free, that is, $\na\vp=0$. This torsion-free condition is equivalent to $\vp$ being closed and coclosed. Much work has been done to study manifolds with $G_2$-holonomy, e. g. \cite{Br1} and \cite{Jo}, but the condition $\vp$ be coclosed is a very rigid condition. If we drop the coclosed condition, then we are studying manifolds with a closed $G_2$-structure. In particular, manifolds with closed $G_2$-structures have been studied in the articles \cite{Br2}, \cite{BrXu} and \cite{ClIv}; these papers focused predominantly on the metric defined by the nondegenerate closed $3$-form $\vp$. We shift our focus to the form $\vp$ itself, in particular, results which depend on $\vp$ being nondegenerate and closed.

Links between Calabi-Yau geometry and $G_2$ geometry have been explored in the context of mirror symmetry by Akbulut and Salur \cite{AkSa2}. Of course, the connections between symplectic geometry and Calabi-Yau geometry are obvious; moreover, connections between symplectic and contact geometry have been explored for centuries. Thus, it seems completely natural to try to find connections between contact geometry and $G_2$ geometry. The study of these relationships is an ongoing project that begins with the work by Arikan, Cho and Salur \cite{ACS}, and the purpose of the current article (and many upcoming articles) is to continue this study by examining the geometry of closed $G_2$-structures as an analogue of symplectic geometry.

Treating symplectic geometry and $G_2$ geometry as being analogues of one another is not new. In \cite{BrGr} and \cite{Gray} vector cross products (on manifolds) are studied in a general setting; in particular, symplectic geometry is the geometry of $1$-fold vector cross products, better known as almost complex structures, and $G_2$ geometry is the geometry of $2$-fold vector cross products in dimension $7$. Further, in all cases, one can show that using the metric, there is a nondegenerate differential form of degree $k+1$ associated to a $k$-fold vector cross product. This yields the symplectic form associated to almost complex structures and the $G_2$ $3$-form $\vp$ associated to $2$-fold vector cross products in dimension $7$. Examples of manifolds with $G_2$ structures satisfying various conditions (including closed $G_2$ structures) are studied and classified in \cite{CMS}, \cite{Fe1}, \cite{Fe2}, \cite{FeGr} and \cite{FeIg}.

This article is based on two elementary results from symplectic geometry: $1)$ the cotangent bundle $T^*X$ of any $n$-dimensional manifold $X$ admits a symplectic form, and $2)$ the conormal bundle $N^*S$ of any $k$-dimensional submanifold $S$ of $X$ with $k<n$ is a Lagrangian submanifold of $T^*X$. It is a well-known result (\cite{Br2}) that any $7$-manifold which is spin admits a $G_2$-structure. We show that for an orientable $3$-manifold $X$, $T^*X\t\R$ is spin, and hence, admits a $G_2$-structure. Next, we calculate an explicit formula in coordinates for a $G_2$-structure $\vp$ on $T^*X\t\R$. Using this, we prove:

\begin{theo}
Let $S$ be a $2$-dimensional submanifold of $X$, and let $N^*S$ denote the conormal bundle of $S$ in $T^*X$. Let $i:N^*S\hookrightarrow(T^*X\t\R,\vp)$ be the inclusion map. Then $i^*\vp=0$.
\end{theo}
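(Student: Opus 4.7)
The plan is to split $i^*\vp$ into its two natural pieces and dispose of each by a different argument, both reducing to a pointwise check in coordinates adapted to $S$.

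First, since $N^*S$ is included in $T^*X\t\R$ via the slice $T^*X\t\{0\}$, the coordinate $t$ is constant on $N^*S$, so $i^*\d t=0$. This immediately kills the entire $\om\w\d t$ contribution: $i^*(\om\w\d t)=i^*\om\w i^*\d t=0$. (The classical fact that $N^*S$ is a Lagrangian submanifold of $T^*X$, i.e.\ $i^*\om=0$, would give the same conclusion by another route, but is not needed.) The content of the theorem therefore reduces to $i^*\Re\Om=0$.

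For that, pick an arbitrary $p\in S$ and choose local coordinates $(x_1,x_2,x_3)$ on $X$ near $p$ with $S=\{x_3=0\}$, together with the dual fiber coordinates $(y_1,y_2,y_3)$ on $T^*X$. In these coordinates $N^*S$ is the locus $\{x_3=0,\,y_1=0,\,y_2=0\}$, parameterized by $(x_1,x_2,y_3)$, with tangent space at $p$ spanned by $\pd_{x_1},\pd_{x_2},\pd_{y_3}$. Thus $i^*\d x_3=i^*\d y_1=i^*\d y_2=0$, while only $i^*\d x_1,\,i^*\d x_2,\,i^*\d y_3$ survive. Consequently, any wedge monomial on $T^*X$ that contains at least one factor from $\{\d x_3,\d y_1,\d y_2\}$ pulls back to zero.

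Applying this to the coordinate formula for $\Re\Om$ derived earlier in the paper---which for the natural Calabi--Yau-type choice $\Om=(\d x_1+i\,\d y_1)\w(\d x_2+i\,\d y_2)\w(\d x_3+i\,\d y_3)$ reads
\[
\Re\Om=\d x_1\w\d x_2\w\d x_3-\d x_1\w\d y_2\w\d y_3-\d y_1\w\d x_2\w\d y_3-\d y_1\w\d y_2\w\d x_3,
\]
each of the four monomials contains exactly one factor from $\{\d x_3,\d y_1,\d y_2\}$ (respectively $\d x_3,\d y_2,\d y_1,\d y_1$), so every term pulls back to zero. Hence $i^*\Re\Om$ vanishes at $p$; as $p\in S$ was arbitrary, $i^*\Re\Om\equiv 0$ on $N^*S$, and combined with the first paragraph this gives $i^*\vp=0$.

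The main obstacle I anticipate is reconciling this local, coordinate-based check with the global definition of $\Om$ in the paper. An orientable $3$-manifold is parallelizable, so $\Om$ is presumably built from a fixed global coframe on $X$ rather than from an arbitrary coordinate chart adapted to $S$. The key feature of $\Re\Om$ used above, however, is algebraic: in any basis $(e^1,e^2,e^3)$ of $T_p^*X$ with corresponding dual fiber basis $(f^1,f^2,f^3)$, the real part of $(e^1+if^1)\w(e^2+if^2)\w(e^3+if^3)$ has the same four-monomial shape, and choosing the basis at $p$ to be adapted to $T_pS$ (so $e^3$ annihilates $T_pS$) preserves the annihilation pattern $\{e^3,f^1,f^2\}\mapsto 0$. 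The pointwise argument is therefore frame-independent, which is all that is needed.
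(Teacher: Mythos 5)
Your proof is correct and follows essentially the same route as the paper's: both expand $\vp$ in a coordinate chart adapted to $S$, where $N^*S$ is cut out by $x_3=\xi_1=\xi_2=0$ (and $t=0$), and observe that every wedge monomial of $\vp$ contains a factor that pulls back to zero. Your separate disposal of the $\om\w\d t$ piece via $i^*\d t=0$ and your closing remark on frame-independence are minor refinements of, not departures from, the paper's argument.
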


\section{$G_2$ Geometry}
We begin with a description of $G_2$ geometry in flat space, then we consider this geometry on $7$-manifolds.

Consider the octonions $\O$ as an $8$-dimensional real vector space. This becomes a normed algebra when equipped with the standard Euclidean inner product on $\R^8$. Further, there is a cross product operation given by $u\t v=\Im(\overline{v}u)$ where $\overline{v}$ is the conjugate of $v$ for $u,v\in\O$. This is an alternating form on $\Im\O$ since, for any $u\in\Im\O$, $u^2\in\Re\O$. We now define a $3$-form on $\Im\O$ by $\vp(u,v,w)=\langle u\t v,w\rangle$. In terms of the standard orthonormal basis $\{e_1,\ldots,e_7\}$ of $\Im\O$, $\vp_0=e^{123}+e^{145}+e^{167}+e_{246}-e^{257}-e^{347}-e^{356}$ where $e^{ijk}=e^i\w e^j\w e^k$. Under the isomorphism $\R^7\simeq\Im\O$, with coordinates on $\R^7$ given by $(x^1,\ldots,x^7)$, we have $\vp_0=\d x^{123}+\d x^{145}+\d x^{167}+\d x^{246}-\d x^{257}-\d x^{347}-\d x^{356}$.

\begin{dfn}
Let $M$ be a $7$-dimensional manifold. $M$ has a \emph{$G_2$-structure} if there is a smooth $3$-form $\vp\in\Om^3(M)$ such that at each $x \in M$, the pair $(T_{x}(M),\varphi(x))$ is isomorphic to $(T_{0}(\mathbb{R}^{7}),\varphi_{0})$. A $7$-manifold $M$ has a \emph{closed} $G_2$-structure if the $3$-form $\vp$ is also closed, $\d\vp=0$.
\end{dfn}

Equivalently, a smooth $7$-dimensional manifold $M$ has a $G_2$-structure if its tangent frame bundle reduces to a $G_2$-bundle. For a manifold with $G_2$-structure $\vp$, there is a natural Riemannian metric and orientation induced by $\vp$ given by $(Y\lrcorner\vp)\w(\tilde{Y}\lrcorner\vp)\w\vp=\langle Y,\tilde{Y}\rangle_{\vp}dvol_M.$ In particular, the $3$-form $\vp$ is nondegenerate.

\begin{rem}
One defines a $G_2$-manifold as a smooth $7$-manifold with torsion-free $G_2$-structure, i. e., $\na\vp=0$ where $\na$ is the Levi-Civita connection of the metric $\langle\cdot,\cdot\rangle_{\vp}$. This means that $(M,\vp)$ has holonomy group contained in $G_2$ and that $d\varphi=\d{\ast}\varphi=0$. For our purposes, we do not assume that $\d*\vp=0$, so that $(M,\vp)$ will be a manifold with closed $G_2$-structure as defined above.
\end{rem}

\section{The Cotangent Bundle}
This material can be found in most introductions to symplectic geometry or topology, e. g. \cite{daSi} or \cite{McSa}.

Let $X$ be any $n$-dimensional manifold (all manifolds under consideration are assumed to be $C^{\infty}$); let $(U;x_1,\ldots,x_n)$ be a coordinate chart for $X$, so that for each $1\leq i\leq n$, $x_i:U\to\R$. For any point $x\in X$, the differentials $(\d x_1)_x,\ldots,(\d x_n)_x$ form a basis for the cotangent space $T^*_xX$ at $x$; hence, for any covector $\xi\in T^*_xX$, $\xi=\sum_i\xi_i(\d x_i)_x$ with the $\xi_i\in\R$. This yields a coordinate chart $(T^*U;x_1,\ldots,x_n,\xi_1,\ldots,\xi_n)$ for the cotangent bundle of $X$ associated to the coordinates $x_1,\ldots,x_n$ on $X$.

Using these coordinates, the so-called tautological $1$-form on $T^*U$ is defined by $\al:=\sum_i\xi_i\d x_i$. This definition is invariant under changes of coordinates: Let $(T^*V;y_1,\ldots,y_n,\eta_1,\ldots,\eta_n)$ be an overlapping coordinate chart; then $$\eta_j=\sum_i\xi_i\frac{\pd x_i}{\pd y_j}$$ and $$\d x_i=\sum_j\frac{\pd x_i}{\pd y_j}\d y_j.$$ Therefore, in the overlap $$\sum_i\xi_i\d x_i=\sum_i\xi_i(\sum_j\frac{\pd x_i}{\pd y_j}\d y_j)=\sum_{i,j}\xi_i\frac{\pd x_i}{\pd y_j}\d y_j=\sum_j\eta_j\d y_j$$ Now, define a $2$-form by $\om:=-\d\al=\sum_i\d x_i\w\d\xi_i$; $\om$ is also independent of the choice of coordinates, so $\om$ is a symplectic form on the cotangent bundle $T^*X$, called the canonical symplectic form.

Now, let $S$ be any $k$-dimensional submanifold of $X$ with $k<n$. Recall that the conormal space $N^*_xS$ at $x\in S$ is given by $$N^*_xS:=\{\xi\in T^*X:\xi(v)=0\text{ for all }v\in T_xS\},$$ and the conormal bundle $N^*S$ of $S$ by $$N^*S=\{(x,\xi)\in T^*X:x\in S, \xi\in N^*_xS\}.$$ Let $(U;x_1,\ldots,x_n)$ be a coordinate chart on $X$ such that $S\cap U$ is given by the equations $$x_{k+1}=\cdots=x_n=0.$$ In the associated cotangent bundle coordinate chart $(T^*U;x_1,\ldots,x_n,\xi_1,\ldots,\xi_n)$, any $\xi\in N^*S\cap T^*U$ is given by $$\xi=\sum_{i=1}^k\xi_i\d x_i$$ since $x_{k+1}=\cdots=x_n=0$ on $S$; further, since $\xi\in N^*S$ and $T^*_xS$ is spanned by $$(\frac{\pd}{\pd x_1})_x,\ldots,(\frac{\pd}{\pd x_k})_x,$$ we find that $$0=\xi((\frac{\pd}{\pd x_i})_x)=\xi_i\text{, for all }1\leq i\leq k$$ Hence, $N^*S\cap T^*U$ is described by the equations $x_{k+1}=\cdots=x_n=\xi_1=\cdots=\xi_k=0$, so $N^*S$ is an $n$-dimensional submanifold of $T^*X$; further, $\al=\sum_{i=1}^n\xi_i\d x_i$ when restricted to $N^*S$ is zero, so $\om|_{N^*S}\equiv 0$. Thus, $N^*S$ is a Lagrangian submanifold of $T^*X$.

\section{$G_2$-Structures on $T^*X\t\R$}

References for this section are  \cite{AkSa1}, \cite{Br2}, \cite{LaMi} and \cite{Mi}.

Recall that for each $n\geq 3$, the Lie group $SO(n)$ is connected, and it has a double-covering map $\iota : Spin(n) \rightarrow SO(n)$ where the Lie group $Spin(n)$ is a compact, connected, simply-connected Lie group. An oriented Riemannian manifold $(X,g)$ has $SO(n)$ as structure group on the tangent bundle. An spin structure on $(X,g)$ is a $Spin(n)$-principal bundle over $X$, together with a bundle map $ \pi : P_{Spin(n)}X \rightarrow P_{SO(n)}X$ such that $\pi(pg)=\pi(p)\iota(g)$ for $p \in P_{Spin(n)}X, g\in Spin(n)$. A spin manifold is an oriented Riemannian manifold with a spin structure on its tangent bundle.

\begin{thm}
For any oriented Riemannian $3$-manifold $X$, $T^*X\t\R$ has a $G_2$-structure.
\end{thm}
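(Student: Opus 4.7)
The plan is to reduce the problem to showing that the $7$-manifold $T^*X\t\R$ is spin, and then to invoke the classical existence theorem (cited in the preceding paragraph from \cite{Br2} and \cite{LaMi}) which states that a smooth $7$-manifold admits a $G_2$-structure if and only if it is orientable and spin, equivalently $w_1=w_2=0$.

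First I would use the classical fact (due to Stiefel) that every orientable smooth $3$-manifold is parallelizable; this follows from $w_1(X)=0$ (orientability), $w_2(X)=0$ (a consequence of Wu's formulas in dimension $3$), and vanishing of the top obstruction for dimensional reasons. Hence $TX$ is a trivial rank-$3$ bundle over $X$, and after choosing a Riemannian metric $g$ on $X$ (or just an isomorphism) the same is true of $T^*X$. In particular, the total space $T^*X$ is diffeomorphic to $X\t\R^3$.

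Next I would analyze the tangent bundle of the $7$-manifold $M:=T^*X\t\R$. Via the diffeomorphism $T^*X\cong X\t\R^3$ and the projections $\pi_X$, $\pi_{\R^3}$, $\pi_\R$ onto the three factors of $M\cong X\t\R^3\t\R$, one has
$$TM\;\cong\;\pi_X^*TX\;\op\;\pi_{\R^3}^*T\R^3\;\op\;\pi_\R^*T\R,$$
and each summand on the right is trivial (the first by parallelizability of $X$, the remaining two tautologically). Thus $TM$ is a trivial rank-$7$ bundle, so $M$ is parallelizable; in particular all Stiefel-Whitney classes of $M$ vanish and $M$ is spin.

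Applying the existence theorem for $G_2$-structures on spin $7$-manifolds then produces a nondegenerate $3$-form $\vp\in\Om^3(M)$ modeled pointwise on $\vp_0$, completing the proof. The only genuinely nontrivial input is the parallelizability of orientable $3$-manifolds; everything else is routine bundle-theoretic bookkeeping, and I would expect that the real work of the paper (not of this theorem) lies in the \emph{explicit} construction of such a $\vp$ that the subsequent section carries out in coordinates, rather than in the mere existence established here.
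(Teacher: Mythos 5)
Your proposal is correct and follows essentially the same route as the paper: both reduce the theorem to showing that the $7$-manifold $T^*X\t\R$ is spin (starting from Stiefel's parallelizability of orientable $3$-manifolds) and then invoke the standard existence result for $G_2$-structures on spin $7$-manifolds. The only difference is in the bookkeeping for spinness: the paper constructs an induced spin structure on $T^*X$ via principal-bundle maps and then stabilizes through $Spin(3)\ra Spin(4)$ to handle the extra $\R$ factor, whereas you simply observe that the total space $T^*X\t\R\cong X\t\R^4$ is itself parallelizable --- a cleaner and equally valid way to reach the same conclusion.
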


\begin{proof}
Let $X$ be an oriented Riemannian $3$-manifold. We know that every orientable $3$-manifold is parallelizable, and since a framing on a bundle gives a spin structure, $X$ is a spin manifold.

Now we check that $T^{\ast}X$ is itself a spin manifold for a spin manifold $X$. Recall $T^{\ast}X$ carries a canonical $1$-form $\sum p_{i}dx^{i}$, where $p_{i}$ are coordinates in $T^{\ast}X$ and $x^{i}$ are coordinates on $X$. One can define the map $T^{\ast}X\times Spin(3) \rightarrow T^{\ast}X$ by $(p_{i},g)\mapsto p_{i}g|_{x^{i}}=p_i(v_{i}g)$ for $v_{i} \in T_{x^{i}}X$; it is well-defined because of the spin structure on $X$. This implies that $T^{\ast}X$ has an induced spin structure from $X$. Another way to see this, one can find a bundle map of the principal bundles $b :P_{Spin}T^{\ast}X \rightarrow P_{Spin}TX$ over $X$ because the map $\mathfrak{F} : \mathcal{E}(T^{\ast}X)\rightarrow \mathcal{E}(TX)$ is a linear, where $\mathcal{E}$ is the set of sections of the bundle. This gives a map $P_{Spin(3)}X \rightarrow P_{SO(3)}X$ for which the following diagram commutes:
$$\begin{array}{ccc}
      P_{Spin(3)}T^{\ast}X & \longrightarrow^{\pi_{1}} & P_{SO(3)}T^{\ast}X \\
      &  & \\
     \downarrow &  & \downarrow  \\
      &  & \\
      P_{Spin(3)}TX & \longrightarrow_{\pi_{2}}  & P_{SO(3)}TX \\
  \end{array}
$$

Let $E(\xi)$ be the total space of $\xi$ with base space $X$. A principal $Spin(3)$-bundle $E(\xi)\times Spin(3) \rightarrow X$ induces a principal $Spin(4)$-bundle $E(\xi) \times Spin(4)\rightarrow X$ which is itself induced from $Spin(3)(\simeq SU(2)\simeq S^{3}) \rightarrow Spin(4)(\simeq SU(2) \times SU(2))$; hence, a spin structure on $\xi$ gives a spin structure on $\xi \oplus\varepsilon$, so $T^{\ast}X \times \mathbb{R}$ admits a spin structure. By \cite[p. 4]{AkSa1}, we conclude $T^{\ast}X \times \mathbb{R}$ is a smooth $7$-dimensional manifold with a $G_{2}$-structure.
\end{proof}

Let $X$ be an orientable $3$-dimensional manifold with symplectic cotangent bundle $(M=T^*X,\om:=-\d\al)$ where $\al$ is the tautological $1$-form on $M$. If $x_1,x_2,x_3,\xi_1,\xi_2,\xi_3$ are the standard cotangent bundle coordinates associated to the coordinates $x_1,x_2,x_3$ on $X$, define a complex-valued $(3,0)$-form on $M$ by $$\Om=(\d x_1+i\d\xi_1)\w(\d x_2+i\d\xi_2)\w(\d x_3+i\d\xi_3).$$ Consider $M\t\R$. This is a $7$-manifold with coordinates $x_1,x_2,x_3,\xi_1,\xi_2,\xi_3,t$ where $t$ is $\R$ coordinate. Finally, define $\vp=\Re(\Om)+\om\w\d t$.

We show that this defines a $G_2$ structure on $M\t\R$, that is, we exhibit an isomorphism of $(T_{(p,t)}(M\t\R),\vp_{(p,t)})$ with $(\R^7,\vp_0)$. We first calculate $$\Om=(\d x_1+i\d\xi_1)\w((\d x_2\w\d x_3-\d\xi_2\w\d\xi_3)+i(\d x_2\w\d\xi_3-\d x_3\w\d\xi_2))$$ $$=(\d x_1\w\d x_2\w\d x_3-\d x_1\w\d\xi_2\w\d\xi_3+\d x_2\w\d\xi_1\w\d\xi_3-\d x_3\w\d\xi_1\w\d\xi_2)$$ $$+i(\d x_1\w\d x_2\w\d\xi_3-\d x_1\w\d x_3\w\d\xi_2+\d x_2\w\d x_3\w\d\xi_1-\d\xi_1\w\d\xi_2\w\d\xi_3),$$ so we find that $$\vp=\Re\Om+\om\w\d t$$ $$=\d x_1\w\d x_2\w\d x_3-\d x_1\w\d\xi_2\w\d\xi_3+\d x_2\w\d\xi_1\w\d\xi_3-\d x_3\w\d\xi_1\w\d\xi_2$$ $$+\d x_1\w\d\xi_1\w\d t+\d x_2\w\d\xi_2\w\d t+\d x_3\w\d\xi_3\w\d t.$$

Now, for $p=(x,\xi)\in T^*U$, let $$\{(\frac{\pd}{\pd x_1})_p,(\frac{\pd}{\pd x_2})_p,(\frac{\pd}{\pd x_3})_p,(\frac{\pd}{\pd \xi_1})_p,(\frac{\pd}{\pd \xi_2})_p,(\frac{\pd}{\pd \xi_3})_p\}$$ be the basis for the tangent space $T_pM$ at $p$ with respect to the cotangent coordinates on $M$; let $(\frac{\pd}{\pd t})_q$ be the basis for $T_q\R$ with respect to the coordinate $t$ on $\R$. Let $(x_1,\ldots,x_7)$ be the standard Euclidean coordinates on $\R^7$. Define an isomorphism of the tangent vector spaces by $\Phi:T_0\R^7\to T_{(p,q)}(M\t\R)$ by $$\Phi(\frac{\pd}{\pd x_1})_0=-(\frac{\pd}{\pd x_3})_p\text{, }\Phi(\frac{\pd}{\pd x_2})_0=(\frac{\pd}{\pd x_2})_p\text{, }\Phi(\frac{\pd}{\pd x_3})_0=(\frac{\pd}{\pd x_1})_p$$ $$\Phi(\frac{\pd}{\pd x_4})_0=(\frac{\pd}{\pd \xi_1})_p\text{, }\Phi(\frac{\pd}{\pd x_5})_0=(\frac{\pd}{\pd \xi_2})_p\text{, }\Phi(\frac{\pd}{\pd x_6})_0=(\frac{\pd}{\pd \xi_3})_p$$ $$\Phi(\frac{\pd}{\pd x_7})_0=-(\frac{\pd}{\pd t})_q.$$ This induces an isomorphism of the cotangent vector spaces $\Phi^*:T^*_{(p,q)}(M\t\R)\to T^*_0\R^7$ where $$\Phi^*(\d x_1)_p=(\d x_3)_0\text{, }\Phi^*(\d x_2)_p=(\d x_2)_0\text{, }\Phi^*(\d x_3)_p=-(\d x_1)_0$$ $$\Phi^*(\d \xi_1)_p=(\d x_4)_0\text{, }\Phi^*(\d \xi_2)_p=(\d x_5)_0\text{, }\Phi^*(\d \xi_3)_p=(\d x_6)_0$$ $$\Phi^*(\d t)_t=-(\d x_7)_0.$$ Then $$\Phi^*\vp=\Phi^*\d x_1\w\Phi^*\d x_2\w\Phi^*\d x_3-\Phi^*\d x_1\w\Phi^*\d\xi_2\w\Phi^*\d\xi_3+\Phi^*\d x_2\w\Phi^*\d\xi_1\w\Phi^*\d\xi_3-\Phi^*\d x_3\w\Phi^*\d\xi_1\w\Phi^*\d\xi_2$$ $$+\Phi^*\d x_1\w\Phi^*\d\xi_1\w\Phi^*\d t+\Phi^*\d x_2\w\Phi^*\d\xi_2\w\Phi^*\d t+\Phi^*\d x_3\w\Phi^*\d\xi_3\w\Phi^*\d t$$ $$=\d x_3\w\d x_2\w(-\d x_1)-\d x_3\w\d x_5\w\d x_6+\d x_2\w\d x_4\w\d x_6-(-\d x_1)\w\d x_4\w\d x_5$$ $$+\d x_3\w\d x_4\w-(\d x_7)+\d x_2\w\d x_5\w(-\d x_7)+(-\d x_1)\w\d x_6\w(-\d x_7)$$ $$=\d x_1\w\d x_2\w\d x_3+\d x_1\w\d x_4\w\d x_5+\d x_1\w\d x_6\w\d x_7+\d x_2\w\d x_4\w\d x_6$$ $$-\d x_2\w\d x_5\w\d x_7-\d x_3\w\d x_4\w\d x_7-\d x_3\w\d x_5\w\d x_6=\vp_0.$$

In order to show that $\vp$ is independent of the choice of coordinates, it is enough prove that our coordinate definition of $\Om$ on $T^*X$ is independent of the choice of coordinates on $T^*X$. This calculation follows as in \cite{HaLa}. Let $L$ be an oriented $3$-dimensional subspace of $T^*X$. Let $\{f_1,f_2,f_3\}$ be any oriented linearly independent subset of $L$, let $\{e_1,e_2,e_3,Je_1,Je_2,Je_3\}$ denote the standard basis for $\R^6$ and let $A$ be the map defined by $e_j\mapsto f_j$ and $Je_j\mapsto Jf_j$. Now, $A\in GL(3;\C)$, that is, $A$ is complex linear and $A(e_1\w e_2\w e_3)=f_1\w f_2\w f_3$. Let $\{\tilde{f}_1,\tilde{f}_2,\tilde{f}_3\}$ be an oriented orthonormal basis for $L$, and let $B$ be the map defined by $\tilde{f}_j\mapsto f_j$. Then $f_1\w f_2\w f_3=(det B)\tilde{f}_1\w\tilde{f}_2\w\tilde{f}_3$. Since $\Om(A(e_1\w e_2\w e_3))=det_{\C}A$ and $\Om=\Re\Om+i\Im\Om$, we have $$(det B)[\Re\Om(\tilde{f}_1\w\tilde{f}_2\w\tilde{f}_3)]=\Re(det_{\C}A)$$ and $$(det B)[\Im\Om(\tilde{f}_1\w\tilde{f}_2\w\tilde{f}_3)]=\Im(det_{\C}A).$$ Hence, $$(\Re\Om(f_1\w f_2\w f_3))^2+(\Im\Om(f_1\w f_2\w f_3))^2$$ $$=|det_{\C}A|^2=det_{\R}A=vol(A(e_1\w Je_1\w e_2\w Je_2\w e_3\w Je_3))$$ $$=vol(A(e_1\w e_2\w e_3\w Je_1\w Je_2\w Je_3))=vol(f_1\w f_2\w f_3\w Jf_1\w Jf_2\w Jf_3)$$ $$=(det B)^2vol(\tilde{f}_1\w\tilde{f}_2\w\tilde{f}_3\w J\tilde{f}_1\w J\tilde{f}_2\w J\tilde{f}_3).$$

\section{The Conormal Bundle}
Let $X$ be a $3$-dimensional manifold as in the previous section. Then $T^*X\t\R$ admits the $G_2$-structure $\vp=\Re\Om+\om\w\d t$. Let $S$ be a $2$-dimensional submanifold of $X$, and let $(U,x_1,x_2,x_3)$ be a coordinate chart on $X$ such that $S\cap U$ is given by the equation $x_3=0$. For the associated cotangent coordinate chart $(T^*U,x_1,x_2,x_3,\xi_1,\xi_2,\xi_3)$, any $\xi\in N^*S\cap T^*U$ is given by $$\xi=\xi_1\d x_1+\xi_2\d x_2$$ since $x_3=0$ on $S$; further, since $\xi\in N^*S$ and $T^*_xS$ is spanned by $$(\frac{\pd}{\pd x_1})_x,(\frac{\pd}{\pd x_2})_x,$$ we find that $$0=\xi((\frac{\pd}{\pd x_1})_x)=\xi_1$$ and $$0=\xi((\frac{\pd}{\pd x_2})_x)=\xi_2.$$ Hence, $N^*S\cap T^*U$ is described by the equations $x_3=\xi_1=\xi_2=0$. Thus, $N^*S$ is a $3$-dimensional submanifold of $T^*X$.

\begin{prop}
Let $i:N^*S\hookrightarrow(T^*X\t\R,\vp)$ be the inclusion. Then $i^*\vp=0$.
\end{prop}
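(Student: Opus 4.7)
The plan is to pull $\vp$ back to $N^*S$ using the explicit seven-term coordinate formula derived in the previous section, together with the fact (just established above the statement) that $N^*S\cap T^*U$ is cut out inside $T^*X$ by the three equations $x_3=\xi_1=\xi_2=0$. Since $i$ embeds $N^*S$ as a slice at a fixed value of $t$, the four one-forms $\d x_3,\d\xi_1,\d\xi_2,\d t$ all pull back to zero under $i^*$. These four vanishings are really all that the proof needs.

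With those identifications in hand I would simply inspect the expansion
\[
\vp=\d x_1\w\d x_2\w\d x_3-\d x_1\w\d\xi_2\w\d\xi_3+\d x_2\w\d\xi_1\w\d\xi_3-\d x_3\w\d\xi_1\w\d\xi_2+\sum_{j=1}^{3}\d x_j\w\d\xi_j\w\d t
\]
term by term. Each of the four summands of $\Re\Om$ contains at least one factor from $\{\d x_3,\d\xi_1,\d\xi_2\}$, and each of the three summands of $\om\w\d t$ contains $\d t$, so every monomial pulls back to zero and $i^*\vp=0$. As a conceptual sanity check, the $\om\w\d t$ piece vanishes for a second reason: $N^*S$ is Lagrangian in $(T^*X,\om)$, so $\om$ already restricts to zero on $N^*S$ before the $\d t$ factor even enters.

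The argument is essentially routine coordinate bookkeeping, and there is no genuine obstacle; the only care needed is to keep straight which one-forms vanish on $N^*S$ versus on the ambient $T^*X\t\R$, so I would state the defining equations of $N^*S\cap T^*U$ inside $T^*X\t\R$ at the outset and then trace them cleanly through each of the seven monomials.

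As a bonus, the same bookkeeping yields the corollary mentioned in the abstract: if one replaces $i$ by the inclusion $N^*S\t\R\hookrightarrow T^*X\t\R$, then $\d t$ is no longer killed, but the three summands $\d x_j\w\d\xi_j\w\d t$ still vanish on $N^*S\t\R$ because each of them contains one of $\d\xi_1,\d\xi_2,\d x_3$. Hence $\vp|_{N^*S\t\R}\equiv 0$ by the identical term-by-term check, with no extra work.
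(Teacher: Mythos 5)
Your proof is correct and follows the same route as the paper: write $\vp$ in the adapted cotangent coordinates, note that $N^*S\cap(T^*U\t\R)$ is cut out by $x_3=\xi_1=\xi_2=0$ together with a fixed $t$-slice, and observe that each of the seven monomials contains a factor pulling back to zero. Your remarks about the Lagrangian condition and the extension to $N^*S\t\R$ match the paper's corollary as well.
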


\begin{proof}
In this case, $N^*S\cap(T^*U\t\R)$ is given by the equations $x_3=\xi_1=\xi_2=t=0$ where $t$ is the $\R$ coordinate. Recall that in this coordinate system, we have $$\vp=\d x_1\w\d x_2\w\d x_3-\d x_1\w\d\xi_2\w\d\xi_3+\d x_2\w\d\xi_1\w\d\xi_3-\d x_3\w\d\xi_1\w\d\xi_2$$ $$+\d x_1\w\d\xi_1\w\d t+\d x_2\w\d\xi_2\w\d t+\d x_3\w\d\xi_3\w\d t.$$ Hence, for $p\in N^*S$, we have $(i^*\vp)_p=\vp_p|_{T_p(N^*S)}=0$ since every term contains a factor which on $N^*S$ is zero.
\end{proof}

\begin{cor}
Let $i:N^*S\t\R\hookrightarrow(T^*X\t\R,\vp)$ be the inclusion. Then $i^*\vp=0$.
\end{cor}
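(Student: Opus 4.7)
The plan is to imitate the proof of the preceding Proposition, with the only change being that the distinguished coordinate $t$ is now free along the submanifold. First I would fix a local coordinate chart $(U, x_1, x_2, x_3)$ on $X$ in which $S \cap U$ is cut out by $x_3 = 0$, so that on the associated cotangent chart $(T^*U, x_1, x_2, x_3, \xi_1, \xi_2, \xi_3)$ the conormal bundle $N^*S \cap T^*U$ is cut out by $x_3 = \xi_1 = \xi_2 = 0$, exactly as computed in the previous section. Multiplying by the $\R$-factor, the submanifold $(N^*S \times \R) \cap (T^*U \times \R)$ is then cut out inside $T^*X \times \R$ by the \emph{same} three equations $x_3 = \xi_1 = \xi_2 = 0$, with $t$ unconstrained.

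Next I would simply restrict the explicit coordinate formula
\[
\vp = \d x_1\w\d x_2\w\d x_3 - \d x_1\w\d\xi_2\w\d\xi_3 + \d x_2\w\d\xi_1\w\d\xi_3 - \d x_3\w\d\xi_1\w\d\xi_2 + \d x_1\w\d\xi_1\w\d t + \d x_2\w\d\xi_2\w\d t + \d x_3\w\d\xi_3\w\d t
\]
derived in Section 3 and inspect each of its seven summands. The key observation, which is what makes the argument immediate, is that every single summand contains at least one factor drawn from the set $\{\d x_3, \d\xi_1, \d\xi_2\}$: the terms involving $\d x_3$ are the first, fourth and seventh; the terms involving $\d\xi_1$ are the third, fourth and fifth; and the terms involving $\d\xi_2$ are the second, fourth and sixth. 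In particular the crucial term $\d x_1\w\d x_2\w\d t$ (which would \emph{not} vanish upon restriction to $N^*S\times\R$) does not appear in $\vp$.

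Finally, since $x_3$, $\xi_1$ and $\xi_2$ all vanish identically on $N^*S\times\R$, the differentials $\d x_3$, $\d\xi_1$ and $\d\xi_2$ pull back to zero under $i$. Hence every summand of $i^*\vp$ contains a zero factor, so $i^*\vp = 0$, as claimed. There is no real obstacle here: the only point that requires any thought is verifying that no term of $\vp$ survives, and this is precisely why the preceding Proposition's proof extended without modification to give the stronger statement that $t$ can be freed.
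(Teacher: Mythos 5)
Your proof is correct and is essentially the paper's own argument, just spelled out in more detail: the paper's proof of the corollary consists only of the observation that $(N^*S\t\R)\cap(T^*U\t\R)$ is cut out by $x_3=\xi_1=\xi_2=0$, leaving implicit the term-by-term check that you carry out explicitly. Your verification that each of the seven summands of $\vp$ contains a factor from $\{\d x_3,\d\xi_1,\d\xi_2\}$, and your remark that $\d x_1\w\d x_2\w\d t$ does not occur, are exactly the content the paper relies on.
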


\begin{proof}
Note that $(N^*S\t\R)$ is a $4$-dimensional submanifold of $T^*X$, given by the equations $x_3=\xi_1=\xi_2=0$ on $(N^*S\t\R)\cap(T^*U\t\R)$.
\end{proof}

Note that in contrast to the symplectic case, $S$ must be $2$-dimensional; if $S$ is $1$-dimensional, then $\vp|_{N^*S}\neq 0$.

\end{document}